\newcommand{\Z}{\mathbb Z}
\newcommand{\C}{\mathcal C}
\newcommand{\f}{\mathfrak f}
\newtheorem{lemma}{Lemma}[section]
\newtheorem{proposition}[lemma]{Proposition}
\newtheorem{corollary}[lemma]{Corollary}
\newtheorem{theorem}[lemma]{Theorem}
\theoremstyle{remark}
\newtheorem{remark}[lemma]{Remark}
\theoremstyle{definition}
\newtheorem{definition}[lemma]{Definition}
\begin{document}
\title[An inductive proof of the coin problem]{An inductive proof of the Frobenius coin problem of two denominations}
\author[G.~Kapetanakis]{Giorgos Kapetanakis}
\address{Department of Mathematics, University of Thessaly, 3rd km Old National Road Lamia-Athens, 35100, Lamia, Greece}
\email{kapetanakis@uth.gr}
\author[I.~Rizos]{Ioannis Rizos}
\address{Department of Mathematics, University of Thessaly, 3rd km Old National Road Lamia-Athens, 35100, Lamia, Greece}
\email{ioarizos@uth.gr}
\date{\today}

\begin{abstract}
Let $a,b$ be positive, relatively prime, integers. We prove, using induction, that for every $d > ab-a-b$ there exist $x,y\in\Z_{\geq 0}$, such that $d=ax+by$. As a byproduct, we obtain a constructive recursive algorithm for identifying appropriate $x,y$ as above.
\end{abstract}

\subjclass[2020]{11D07; 11D04}

\keywords{Frobenius coin problem; Coin problem; Frobenius number; Induction; Elementary number theory}

\maketitle
\section{Introduction}
Let $a,b$ be positive, relatively prime, integers. The well-known \emph{Frobenius coin problem of two denominations}, or just the \emph{coin problem}, is identifying the largest integer (if it exists) that is not in the set
\[ \C_{a,b} = \{ d\in\Z_{\geq 0} : d = ax+by\text{ for some } x,y\in\Z_{\geq 0} \} . \]
This number is called the \emph{Frobenius number} and we denote it by $\f_{a,b}$.
In fact, the Frobenius coin problem of two denominations was originally answered in 1882 by Sylvester.
\begin{theorem}[Sylvester] \label{main}
Let $a,b\in\Z_{>0}$ with $\gcd(a,b)=1$. Then $\f_{a,b} = ab-a-b$.
\end{theorem}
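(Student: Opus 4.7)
The plan is to split Sylvester's theorem into two claims and handle only the second by induction. The first, that $ab-a-b$ itself is not representable, admits a quick modular argument: from $ab-a-b = ax+by$ with $x,y \geq 0$, reducing modulo $b$ gives $ax \equiv -a \pmod b$, so $x \equiv -1 \pmod b$ by coprimality, hence $x \geq b-1$. But then $ax + by \geq a(b-1) = ab - a > ab - a - b$, a contradiction.

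The substantive part is showing that every $d > ab - a - b$ is representable, and for this I would induct on $\min(a,b)$, mimicking the Euclidean algorithm. The base $\min(a,b) = 1$, say $a=1$, is trivial: $ab - a - b = -1$ and $d = 1 \cdot d + b \cdot 0$ for all $d \geq 0$. For the inductive step, assume $\min(a,b) \geq 2$ and, without loss of generality, $a < b$ (coprimality forbids $a = b$). Write $b = qa + r$ with $0 < r < a$; then $\gcd(a,r) = 1$ and $\min(a,r) = r < a$. A short computation gives $(ab-a-b) - (ar-a-r) = (a-1)(b-r) > 0$, so $d > ar - a - r$, and the inductive hypothesis furnishes a representation $d = ax' + ry$ with $x', y \in \Z_{\geq 0}$.

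It remains to convert this into a representation in the original denominations. The replacement $(x', y) \mapsto (x' + r, y - a)$ preserves $d = ax' + ry$, so I may normalize to $0 \leq y \leq a-1$ while keeping $x' \geq 0$. Then setting $X = x' - qy$ and $Y = y$, the substitution $b = qa + r$ yields $aX + bY = ax' + ry = d$, and it suffices to check $X \geq 0$. Since $y \leq a - 1$ we have $by \leq ab - b$, so $d - by > (ab-a-b) - (ab-b) = -a$; because $d - by = aX$ is a multiple of $a$, this forces $d - by \geq 0$, i.e., $X \geq 0$.

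I expect the main obstacle to be bookkeeping: tracking the two successive changes of variable — first invoking the inductive hypothesis on the pair $(a,r)$, then normalizing $y$ modulo $a$ — and verifying the single inequality at the end where the threshold $ab - a - b$ is used tightly. Unwinding the recursion through the Euclidean reduction should simultaneously yield the constructive algorithm for $(x,y)$ promised in the abstract.
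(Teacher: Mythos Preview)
Your proof is correct. The non-representability argument is essentially the paper's Lemma~\ref{ab-a-b} phrased in congruence language, and your inductive half is complete: the normalization $(x',y)\mapsto(x'+r,y-a)$ only increases $x'$, and the final inequality $aX=d-bY>-a$ together with $a\mid aX$ does force $X\ge 0$.

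The route, however, is genuinely different from the paper's. The paper fixes $a,b$ and inducts on $d$: it introduces the two \emph{minimal unit expressions} $ax_1+by_1=1$ and $ax_2+by_2=1$ from Corollary~\ref{xy}, exhibits an explicit acceptable expression for $ab-a-b+1$, and then shows that adding one of the two unit expressions to an acceptable expression of $d-1$ always yields an acceptable expression of $d$ (Proposition~\ref{m}). You instead fix $d$ and induct on $\min(a,b)$ via the Euclidean step $b=qa+r$, pulling back a representation over $(a,r)$ to one over $(a,b)$. Both are inductive and both are constructive, but the resulting algorithms differ: the paper's walks from $ab-a-b+1$ up to $d$ one unit at a time (and, as noted in Section~\ref{conclusion}, organizes all acceptable expressions into a tree rooted at $ab-a-b+1$), whereas yours recurses along the Euclidean algorithm on the denominations and so reaches $(x,y)$ in $O(\log\min(a,b))$ recursive calls rather than $d-(ab-a-b+1)$ steps. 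Your approach is closer in spirit to continued-fraction arguments; the paper's is the ``add $1$ repeatedly'' induction that the introduction advertises as missing for general $a,b$.
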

In the literature there are multiple proofs of this result, using elementary number theory, geometry, analytic methods, or even by exploring the semigroup structure of $\C_{a,b}$ and its connections with semigroup polynomials and cyclotomic polynomials. We refer the interested reader in \cites{alfonsin05,kapetanakisrizos23a,moore14} and the references therein.

In addition, for given ---usually relatively small--- $a$ and $b$, Theorem~\ref{main} is often proven, see \cite{epp10}*{Proposition~5.2.1}, or given as an exercise, see \cite{rosen12}*{Exercise~5(a), p.~342}, using induction. However, a purely inductive proof, for arbitrary $a$ and $b$ is not straightforward and is apparently missing from the existing literature. 
In this note, we fill this gap by providing one such elementary proof in Section~\ref{sec:main_proof} and, as a byproduct, in Section~\ref{conclusion}, we obtain a recursive algorithmic method for finding appropriate factors $x,y\in\Z_{\geq 0}$, such that $m=ax+by$, for every $m > \f_{a,b}+1$.
%
\section{Preliminaries}
It is clear that we will be working with the linear Diophantine equation
\begin{equation}\label{dioph}
d=ax+by,
\end{equation}
where, we assume that $0<a,b$ and $\gcd(a,b)=1$. We shall also assume that $d\geq 0$, since we are interested in finding positive integer solutions of Eq.~\eqref{dioph} and it is clear that the case $d<0$ is impossible. The following celebrated result from Number Theory is well-known.
\begin{theorem}[B\'ezout's identity] \label{bezout}
  Let $a,b\in\Z$ with $c=\gcd(a,b)$. There exist some $x',y'\in\Z$, such that $ax' +by'  = c$.
\end{theorem}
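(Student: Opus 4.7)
The plan is to prove B\'ezout's identity by combining the well-ordering principle with the division algorithm, the most elementary route and one consistent with the inductive spirit of the paper. The degenerate case $a = b = 0$ gives $c = 0$ and is handled by $x' = y' = 0$, so I would henceforth assume $(a,b) \neq (0,0)$.

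The first step is to introduce the set $S = \{ax + by : x, y \in \Z\} \cap \Z_{>0}$ of positive integer linear combinations of $a$ and $b$. This set is nonempty: if say $a \neq 0$, then whichever of $a$ or $-a$ is positive lies in $S$. By the well-ordering principle, $S$ has a least element $d$, and we may fix $x', y' \in \Z$ with $d = ax' + by'$. The goal is to identify this minimal $d$ with $c = \gcd(a,b)$.

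The heart of the argument is to show $d \mid a$ and $d \mid b$. To see the first, apply the division algorithm to write $a = qd + r$ with $0 \leq r < d$. Then $r = a - qd = (1 - qx')a + (-qy')b$ is an integer combination of $a$ and $b$ satisfying $0 \leq r < d$; if $r$ were positive it would lie in $S$ and contradict the minimality of $d$, so $r = 0$ and $d \mid a$. The symmetric argument, dividing $b$ by $d$, gives $d \mid b$, and hence $d \mid c$. Conversely, $c \mid a$ and $c \mid b$ imply $c \mid (ax' + by') = d$. Since both $c$ and $d$ are positive, $d = c$, and the chosen $x', y'$ are the coefficients we want.

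The only real subtlety is the appeal to well-ordering, which is logically equivalent to induction on $\Z_{>0}$ and thus fits the framework of the paper. A more constructive alternative would be to induct on $\min(|a|,|b|)$ via the Euclidean swap $\gcd(a,b) = \gcd(b, a \bmod b)$, carrying the coefficients through each reduction; this yields the identity together with an explicit algorithm for $x',y'$, at the cost of a slightly longer exposition.
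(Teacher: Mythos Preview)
Your proof is correct and is the standard textbook argument via the well-ordering principle. There is nothing to compare against, however: the paper does not prove B\'ezout's identity at all. It is stated as a ``celebrated result from Number Theory'' that is ``well-known'' and then immediately used to deduce solvability of Eq.~\eqref{dioph} and, via Proposition~\ref{gen_sol}, the content of Corollary~\ref{xy}. So your proposal supplies strictly more than the paper does on this point; your closing remark about the Euclidean alternative is also apt, since that constructive variant is what one would actually use to compute the minimal unit expressions the paper relies on.
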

Clearly, Theorem~\ref{bezout} implies that Eq.~\eqref{dioph} is solvable. In particular, the following is true:
\begin{proposition} \label{gen_sol}
 Let $a,b\in\Z$ not both zero, with $\gcd(a,b)=1$. If $x_0,y_0\in\Z$ are such that $ax_0 +by_0  = d$, then the set of solutions of Eq.~\eqref{dioph} is the (infinite) set
\[ S = \{ (x_0-kb,y_0+ka) : k\in\Z\} . \]
\end{proposition}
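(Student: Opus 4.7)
The plan is to prove set equality by verifying both inclusions: first that every candidate in $S$ satisfies Eq.~\eqref{dioph}, and then that every integer solution of Eq.~\eqref{dioph} is of the prescribed form. The forward direction is a one-line substitution: for any $k\in\Z$,
\[ a(x_0-kb) + b(y_0+ka) = ax_0 + by_0 - akb + bka = d, \]
which shows $S$ is contained in the solution set.

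For the reverse inclusion, I would take an arbitrary integer solution $(x,y)$ of Eq.~\eqref{dioph} and subtract the identity $ax_0+by_0=d$ to obtain
\[ a(x-x_0) = b(y_0 - y). \]
From here the argument pivots on the coprimality hypothesis: since $b$ divides the left-hand side and $\gcd(a,b)=1$, Euclid's lemma forces $b \mid (x-x_0)$. Thus there exists $k\in\Z$ with $x-x_0 = -kb$, i.e.\ $x = x_0 - kb$. Substituting this back into $a(x-x_0)=b(y_0-y)$ and cancelling $b$ (which is nonzero, using that $a,b$ are not both zero together with $\gcd(a,b)=1$) yields $y_0 - y = -ka$, hence $y = y_0 + ka$. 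This exhibits $(x,y)$ as a member of $S$.

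The only mildly delicate step is the invocation of Euclid's lemma to pass from $b\mid a(x-x_0)$ to $b \mid (x-x_0)$; everything else is bookkeeping. I would also briefly justify that $b\neq 0$ (otherwise the divisibility step is vacuous): if $b=0$, then $\gcd(a,b)=|a|=1$ and one handles this edge case separately, getting $S=\{(d,y_0+k):k\in\Z\}$ directly. With these two inclusions established, the solution set coincides with $S$, completing the proof. As a remark I would note that this parametrization is exactly what will let us, in the sequel, slide from one particular solution of Eq.~\eqref{dioph} to a nonnegative one whenever $d$ is large enough.
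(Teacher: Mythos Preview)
Your argument is correct and is the standard textbook proof of this classical fact. Note, however, that the paper does not actually supply a proof of Proposition~\ref{gen_sol}: it is stated as a well-known consequence of B\'ezout's identity and left without justification. So there is nothing to compare against; your write-up simply fills in what the paper takes for granted. One tiny quibble: the parenthetical claiming that ``$a,b$ not both zero together with $\gcd(a,b)=1$'' forces $b\neq 0$ is not quite right on its own (one could have $a=\pm1$, $b=0$), but you do treat the $b=0$ case separately anyway, so the argument stands.
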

However, it is not guaranteed that a solution $(x,y)\in S$ of Eq.~\eqref{dioph} is such that $x,y\geq 0$. For this reason we will call the expression $d=ax+by$ \emph{acceptable} if $x,y\geq 0$ and \emph{unacceptable} otherwise.
%
\begin{corollary}\label{xy}
Let $a,b\in\Z_{>1}$ with $\gcd(a,b)=1$. There exist integers $x_1,x_2,y_1,y_2$ with the following properties:
\begin{enumerate}
  \item $ax_1+by_1 = 1$, \label{ax1+by1=1}
  \item $ax_2+by_2=1$, \label{ax2+by2=1}
  \item $0<x_1<b$, $-a<y_1<0$, $-b<x_2<0$, $0<y_2<a$, \label{xy_signs}
  \item $x_1-x_2=b$, \label{x1-x2=b}
  \item $y_2-y_1=a$. \label{y2-y1=a}
\end{enumerate}
\end{corollary}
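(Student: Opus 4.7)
The plan is to start from any Bézout solution to $ax+by=1$ (guaranteed by Theorem~\ref{bezout}) and then use Proposition~\ref{gen_sol} to slide along the family of integer solutions until the $x$-coordinate lands in the correct window. Concretely, I would pick $(x',y')\in\Z^2$ with $ax'+by'=1$ and, by the division algorithm applied to $x'$ modulo $b$, choose the unique $k\in\Z$ for which $x_1:=x'-kb$ lies in $\{0,1,\dots,b-1\}$; then set $y_1:=y'+ka$. By Proposition~\ref{gen_sol} this yields $ax_1+by_1=1$, which is property~\eqref{ax1+by1=1}.

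Next I would rule out the two boundary integers and derive the sign information on $y_1$. If $x_1=0$ then $by_1=1$, forcing $b=1$ and contradicting $b>1$; hence $0<x_1<b$. From $0<ax_1<ab$ and $by_1=1-ax_1$ I obtain $\tfrac{1}{b}-a<y_1<\tfrac{1}{b}$, and since $y_1\in\Z$ this gives $-a<y_1\le0$. The equality $y_1=0$ would force $ax_1=1$, hence $a=1$, contradicting $a>1$; therefore $-a<y_1<0$, giving the first two inequalities in~\eqref{xy_signs}.

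Finally, I would simply \emph{define} $x_2:=x_1-b$ and $y_2:=y_1+a$. Properties~\eqref{x1-x2=b} and~\eqref{y2-y1=a} then hold by construction, and the identity
\[
 ax_2+by_2=a(x_1-b)+b(y_1+a)=ax_1+by_1=1
\]
gives~\eqref{ax2+by2=1}. The remaining inequalities $-b<x_2<0$ and $0<y_2<a$ in~\eqref{xy_signs} are immediate translations of $0<x_1<b$ and $-a<y_1<0$, respectively.

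There is no real obstacle here; the only subtle step is the exclusion of the boundary values $x_1\in\{0,b\}$ and $y_1=0$, and this is precisely where the hypothesis $a,b>1$ (as opposed to merely $a,b>0$) is used. Everything else is direct application of Theorem~\ref{bezout} and Proposition~\ref{gen_sol} together with the division algorithm.
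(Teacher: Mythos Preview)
Your proof is correct and follows essentially the same approach as the paper: the paper's proof says only ``Immediate from Proposition~\ref{gen_sol} and the well-ordering principle,'' and your argument is a fully written-out version of precisely that idea, using the division algorithm (an equivalent formulation) in place of the well-ordering principle to locate $x_1$ in the window $\{0,\dots,b-1\}$. The only additional content you supply is the explicit exclusion of the boundary values via the hypothesis $a,b>1$, which the paper leaves implicit.
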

\begin{proof}
Immediate from Proposition~\ref{gen_sol} and the well-ordering principle.
\end{proof}
\begin{definition} \label{minimal}
We will call the expressions $ax_1+by_1=1$ and $ax_2+by_2=1$ from Corollary~\ref{xy} \emph{minimal unit expressions}.
\end{definition}
\begin{remark}
It is clear that, if $a,b> 1$ and $ax+by=1$, then one of $x,y$ has to be positive and the other negative,
while if the absolute value of one these numbers increases, it forces the absolute value of the other one to increase as well. Additionally, Proposition~\ref{gen_sol} implies that, in fact, both combinations of signs, i.e., $x>0$ and $y<0$ and vice versa, do always exist and both appear in infinite occasions.
In other words, Definition~\ref{minimal} makes sense and both minimal unit expressions are well-defined, unique and exist for every appropriate choice of $a$ and $b$.
\end{remark}
%
\section{The main proof} \label{sec:main_proof}
First, fix some $a,b\in\Z_{>1}$ that are relatively prime. We omit the cases where $a=1$ or $b=1$ as these cases are trivial. Furthermore, from now on, we fix the numbers $x_1,y_1,x_2,y_2$ as the ones defined in Corollary~\ref{xy}.
%
%
%
We will first show that $ab-a-b\not\in\C_{a,b}$.
\begin{lemma}\label{ab-a-b}
Let $a,b\in\Z_{>1}$ with $\gcd(a,b)=1$. Then $ab-a-b\not\in\C_{a,b}$.
\end{lemma}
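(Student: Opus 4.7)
My plan is to argue by contradiction using the explicit parameterization of integer solutions provided by Proposition~\ref{gen_sol}. Specifically, I assume for contradiction that there exist $x,y\in\Z_{\geq 0}$ with $ab-a-b=ax+by$, and then show that no integer solution of this equation can have both coordinates non-negative.

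The first step is to spot a convenient particular integer solution of $au+bv=ab-a-b$. The pair $(u_0,v_0)=(b-1,-1)$ works immediately, since $a(b-1)+b(-1)=ab-a-b$. By Proposition~\ref{gen_sol}, every integer solution of the equation is then of the form $(b-1-kb,\,-1+ka)$ for some $k\in\Z$, so the hypothetical non-negative solution $(x,y)$ must occur at one of these values of $k$.

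The main step is to rule out every choice of $k$. If the second coordinate $-1+ka$ is to be $\geq 0$, then since $a\geq 2$ we must have $k\geq 1$; but then the first coordinate satisfies $b-1-kb=(1-k)b-1\leq-1<0$. Conversely, if the first coordinate $b-1-kb$ is to be $\geq 0$, then (using $b\geq 2$) we must have $k\leq 0$, in which case $-1+ka\leq -1<0$. In either case one coordinate is strictly negative, contradicting $x,y\geq 0$.

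I do not foresee a serious obstacle: once the particular solution $(b-1,-1)$ is in hand, the proof amounts to a short parity-of-signs check on the arithmetic progression of solutions. The only delicate point is that both hypotheses $a\geq 2$ and $b\geq 2$ are genuinely used, one for each direction of the case split, which is precisely why the lemma restricts to $a,b\in\Z_{>1}$.
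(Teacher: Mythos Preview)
Your argument is correct. The paper's proof takes a slightly different, more direct route: it rewrites the assumed equation as $a(b-1-x)=b(y+1)$, observes that the right-hand side is positive (so $b-1-x>0$), and then uses $\gcd(a,b)=1$ to deduce $b\mid b-1-x$, forcing $b\le b-1-x$, which contradicts $x\ge 0$. Your approach instead invokes Proposition~\ref{gen_sol} to parameterize all integer solutions from the particular solution $(b-1,-1)$ and then rules out every $k$ by a sign check. The two arguments are close cousins---your parameterization $x=b-1-kb$ is precisely the statement $x\equiv -1\pmod b$ that the paper's divisibility step encodes---but the paper's version is shorter and avoids the case split, while yours ties in more explicitly with the machinery set up in the preliminaries. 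One small remark: your closing comment that the hypotheses $a\ge 2$ and $b\ge 2$ are ``genuinely used'' in the two cases is not quite accurate; in fact $a,b\ge 1$ already suffices for both implications (from $ka\ge 1$ and $k\in\Z$, $a\ge 1$ alone forces $k\ge 1$, and similarly in the other case). This does not affect the validity of your proof.
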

\begin{proof}
Assume that $ab-a-b\in\C_{a,b}$. Then there exist $x,y\in\Z_{\geq 0}$, such that
\[ ab-a-b = ax+by . \]
It follows that $a(b-1-x) = b(y+1)$, hence $b-1-x>0$ and, since $\gcd(a,b)=1$, $b\mid b-1-x \Rightarrow b\leq b-1-x$, a contradiction.
\end{proof}
Then, we show that $ab-a-b+i\in\C_{a,c}$, for $i=1,2$.
\begin{lemma}\label{ab-a-b+1,2}
Let $a,b\in\Z_{>1}$ with $\gcd(a,b)=1$. Then $ab-a-b+i\in\C_{a,b}$, for $i=1,2$ and, given an acceptable expression of $ab-a-b+1$, we obtain an acceptable expression of $ab-a-b+2$ by adding a minimal unit expression to it. 
\end{lemma}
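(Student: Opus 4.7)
The plan is twofold, mirroring the two assertions in the lemma. First, I will exhibit an explicit acceptable expression for $ab-a-b+1$, giving $ab-a-b+1\in\C_{a,b}$. Second, I will show that, given any acceptable expression of $ab-a-b+1$, coordinate-wise addition of one of the two minimal unit expressions produces an acceptable expression of $ab-a-b+2$; this simultaneously delivers $ab-a-b+2\in\C_{a,b}$ and the constructive link asserted in the lemma.

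For the first step, writing $\tilde x_2=-x_2$ with $0<\tilde x_2<b$ and using $ax_2+by_2=1$, I rewrite the target as
\[
ab-a-b+1 = ab-a-b+(by_2-a\tilde x_2) = a(b-1-\tilde x_2)+b(y_2-1).
\]
The ranges of $\tilde x_2$ and $y_2$ in Corollary~\ref{xy}(\ref{xy_signs}) immediately give $b-1-\tilde x_2\geq 0$ and $y_2-1\geq 0$, so this expression is acceptable.

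For the second step, let $(x,y)\in\Z_{\geq 0}^2$ satisfy $ax+by=ab-a-b+1$, and set $\tilde y_1=-y_1$ and $\tilde x_2=-x_2$. Since $x_1>0$ and $y_2>0$ by Corollary~\ref{xy}(\ref{xy_signs}), the pair $(x+x_1,y+y_1)$ is acceptable iff $y\geq\tilde y_1$, and $(x+x_2,y+y_2)$ is acceptable iff $x\geq\tilde x_2$. If both conditions fail, then $x\leq\tilde x_2-1$ and $y\leq\tilde y_1-1$, hence
\[
ab-a-b+1 = ax+by \leq a\tilde x_2+b\tilde y_1-a-b.
\]
The decisive identity is $a\tilde x_2+b\tilde y_1=ab-1$, which I would derive from Corollary~\ref{xy} by combining $b\tilde y_1=ax_1-1$, $a\tilde x_2=by_2-1$, and $x_1=x_2+b$, yielding $a\tilde x_2+b\tilde y_1 = ax_1+by_2-2 = a(x_2+b)+by_2-2 = ab-1$. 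Substituting this produces $ab-a-b+1\leq ab-a-b-1$, a contradiction, so at least one of the two minimal unit expressions yields an acceptable expression of $ab-a-b+2$.

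The only real subtlety, and hence the main obstacle, is isolating the identity $a\tilde x_2+b\tilde y_1=ab-1$: it is precisely this equality that forces the two ``forbidden'' regions for the minimal unit expressions to be just barely too small to contain any acceptable solution of $ab-a-b+1$, so that one of the two choices must succeed.
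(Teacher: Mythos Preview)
Your proof is correct and follows essentially the same route as the paper's: the same explicit acceptable expression for $ab-a-b+1$, then adding each minimal unit expression and deriving a contradiction if both fail. Your contradiction is packaged slightly differently---via the identity $a\tilde x_2+b\tilde y_1=ab-1$ and a direct bound on $ax+by$, which incidentally handles an arbitrary acceptable expression of $ab-a-b+1$---whereas the paper works with its specific expression and reaches $2\le 0$ more directly; the substance is the same.
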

\begin{proof}
It is trivial to check, given item~\eqref{xy_signs} of Corollary~\ref{xy}, that
\begin{equation}\label{ab-a-b+1} ab-a-b+1 = a(b-1+x_2) + b(y_2-1) \end{equation}
is an acceptable expression of $ab-a-b+1$, hence $ab-a-b+1\in\C_{a,b}$. As an exercise, the reader can prove that the above is actually the unique acceptable expression of $ab-a-b+1$.

Next, items~\eqref{ax1+by1=1} and \eqref{ax2+by2=1} of Corollary~\ref{xy} yield
\[ ab-a-b+2 = a(b-1+2x_2) + b(2y_2-1)= a(b-1+x_1+x_2) + b(y_1+y_2-1) \]
and we will show that one of these expressions is acceptable. We have already seen that $b-1+x_2\geq 0$ and $y_2-1\geq 0$, and from item~\eqref{xy_signs} of Corollary~\ref{xy}, we have that $x_1,y_2>0$. Thus, it suffices to show that
\[
b-1+2x_2\geq 0 \ \text{ or } \ y_1+y_2-1 \geq 0 .
\]
We assume that $b-1+2x_2< 0$ and $y_1+y_2-1 < 0$. From item~\eqref{x1-x2=b} of Corollary~\ref{xy}, we obtain:
\[  b-1+2x_2 < 0 \Rightarrow x_1+x_2 \leq 0 \Rightarrow ax_1+ax_2 \leq 0 . \]
We also have that
\[ y_1+y_2-1 < 0 \Rightarrow y_1+y_2 \leq 0 \Rightarrow by_1+by_2 \leq 0 . \]
We combine the above with items~\eqref{ax1+by1=1} and \eqref{ax2+by2=1} of Corollary~\ref{xy} and get that $2\leq 0$, a contradiction.
\end{proof}
%
%
Finally, we prove the following:
\begin{proposition}\label{m}
Let $a,b\in\Z_{>1}$, with $\gcd(a,b)=1$, and $d\geq ab-a-b+2$. Then $d-1\in\C_{a,b}$ and, if $d-1=a(x_0+x_i)+b(y_0+y_i)$ is an acceptable expression of $d-1$ for $i=1$ or $i=2$, then one of the expressions
\[ d = a(x_0+x_i+x_1)+b(y_0+y_i+y_1) = a(x_0+x_i+x_2)+b(y_0+y_i+y_2) \]
is acceptable, i.e., $d\in\C_{a,b}$.
\end{proposition}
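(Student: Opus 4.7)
The plan is to prove the proposition by a direct algebraic argument for the constructive step, combined with a strong induction on $d$ that yields the first assertion $d-1\in\C_{a,b}$. The technical heart is the observation that, for $d$ large enough, at least one of the two candidate expressions for $d$ built from an acceptable expression of $d-1$ must itself be acceptable.

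For the first assertion, I would set up a strong induction on $d\geq ab-a-b+2$. The base case $d=ab-a-b+2$ reduces to Lemma~\ref{ab-a-b+1,2}: the acceptable expression $ab-a-b+1=a(b-1+x_2)+b(y_2-1)$ furnished there already fits the form required by the proposition (with $i=2$, $x_0=b-1$, $y_0=-1$). For $d\geq ab-a-b+3$, the inductive hypothesis applied to $d-1$ together with the constructive conclusion of the present proposition applied to $d-1$ supplies the required acceptable expression of $d-1$ in the prescribed form.

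For the constructive step itself, set $X=x_0+x_i\geq 0$ and $Y=y_0+y_i\geq 0$. Item~\eqref{xy_signs} of Corollary~\ref{xy} gives $x_1>0$ and $y_2>0$, so $X+x_1\geq 0$ and $Y+y_2\geq 0$ hold automatically. Thus the acceptability of at least one of the two candidates reduces to verifying
\[ Y+y_1\geq 0 \quad\text{or}\quad X+x_2\geq 0. \]
Assuming both fail, the integer inequalities $X\leq -x_2-1$ and $Y\leq -y_1-1$ yield
\[ d-1 = aX+bY \leq -(ax_2+by_1) - a - b. \]
Invoking $ax_1+by_1=1$ together with $x_1=x_2+b$ from items~\eqref{ax1+by1=1} and \eqref{x1-x2=b} of Corollary~\ref{xy} should reduce $-(ax_2+by_1)$ to $ab-1$, producing the bound $d-1\leq ab-a-b-1$, which contradicts $d\geq ab-a-b+2$.

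The main obstacle, such as it is, is spotting the right combination of identities from Corollary~\ref{xy}: the two minimal unit relations and the linking identity $x_1-x_2=b$ must be combined so that $-(ax_2+by_1)$ collapses exactly to $ab-1$, which is precisely the value needed to bring the hypothesis $d\geq ab-a-b+2$ into contradiction. Once this identity is in hand, every other step is routine, and the form of the proof makes the resulting algorithm transparent: at each stage, add whichever minimal unit expression keeps both coefficients non-negative.
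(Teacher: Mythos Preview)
Your proposal is correct and follows essentially the same route as the paper: both arguments reduce acceptability of one of the two candidate expressions for $d$ to the disjunction $Y+y_1\geq 0$ or $X+x_2\geq 0$, assume both fail, and combine the resulting inequalities with $ax_1+by_1=1$ and $x_1-x_2=b$ to force $d-1\leq ab-a-b-1$, a contradiction. Your organization is marginally cleaner---you isolate the constructive step as a direct argument valid for all $d\geq ab-a-b+2$ and handle both $i=1,2$ at once via $X,Y$, whereas the paper assumes $i=1$ without loss of generality and weaves the step into the induction---but the algebra and the key identity are identical.
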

\begin{proof}
We will use induction on $d$. Clearly, Lemma~\ref{ab-a-b+1,2} implies the desired result for $d=ab-a-b+2$.

Suppose the desired results holds for some $k>ab-a-b+1$. This implies that, without loss of generality, we may assume that
\begin{equation}\label{k} k=a(x_0+x_1) + b(y_0+y_1) \end{equation}
is an acceptable expression of $k$, that is,
\begin{equation}\label{k1} x_0+x_1 \geq 0 \text{ and } y_0+y_1 \geq 0 , \end{equation}
while, clearly,
\begin{equation}\label{k-1} k-1 = ax_0+by_0 . \end{equation}

From Eq.~\eqref{k} and items~\eqref{ax1+by1=1} and \eqref{ax2+by2=1} of Corollary~\ref{xy}, we get that
\[ k+1 = a(x_0+2x_1) + b(y_0+2y_1) = a(x_0+x_1+x_2) + b(y_0+y_1+y_2) , \]
where it suffices to show that at least one of the above expressions is acceptable. Further, from Eq.~\eqref{k1} and item~\eqref{xy_signs} of Corollary~\ref{xy}, it suffices to show that
\[ y_0+2y_1\geq 0 \text{ or } x_0+x_1+x_2\geq 0 . \]
Assume that $y_0+2y_1< 0$ and $x_0+x_1+x_2< 0$. Item~\eqref{x1-x2=b} of Corollary~\ref{xy} yields that
\[ x_0 + x_1 + x_2 <0 \Rightarrow 2x_1 \leq b-1-x_0 \Rightarrow 2ax_1 \leq ab - a - ax_0 . \]
Likewise, we also have that
\[ y_0 + 2y_1 <0 \Rightarrow 2y_1 \leq -1-y_0 \Rightarrow 2by_1\leq -b-by_0 . \]
We combine the above and get that
\[ 2(ax_1+by_1) \leq ab-a-b-(ax_0+by_0) . \]
The above, combined with Eq.~\eqref{k-1} and item~\eqref{ax1+by1=1} of Corollary~\ref{xy}, yields
\[ 2 \leq ab-a-b-(k-1) \Rightarrow k\leq ab-a-b-1 , \]
a contradiction. The desired result follows.
\end{proof}
Theorem~\ref{main} follows directly from Lemmas~\ref{ab-a-b} and \ref{ab-a-b+1,2} and Proposition~\ref{m}.
\section{Concluding remarks}\label{conclusion}
In this work, we provided an inductive proof of Theorem~\ref{main} that was missing from the existing literature. As already mentioned, it is sometimes even given as a simple example or an exercise to prove Theorem~\ref{main} for specific ---usually small--- values of $a$ and $b$. The approach in these cases relies on direct computations that lack the generality of our proof and does not provide any insight on possible general proofs. In particular, the core idea of this, specific, approach can be summarized as follows:
\begin{enumerate}
  \item Show that $ab-a-b\not\in\C_{a,b}$.
  \item Show, with explicit examples, that $d_i := ab-a-b+i\in\C_{a,b}$ for $1\leq i\leq a$.\label{explicit}
  \item If $d \geq ab-a-b$, then, necessarily, $d = d_i + ka$, for some $1\leq i\leq a$ and $k\in\Z_{\geq 0}$. Given the acceptable expression of $d_i$ from the previous step, the above can easily give rise to an acceptable expression of $d$, thus $d\in\C_{a,b}$.
\end{enumerate}
Note that step~\eqref{explicit}, replies on brute force and explicit computations.
In addition to the general nature of our proof, a recursive method for finding acceptable expressions for any $d\geq \f_{a,b}$ is obtained from it. In particular, one can follow the steps below: 
\begin{enumerate}
  \item Find the minimal unit expressions.
  \item Compute the acceptable expression for $ab-a-b+1$, as given in Eq.~\eqref{ab-a-b+1}.
  \item An acceptable expression for any $d>ab-a-b+1$ is obtained by adding one of the minimal unit expressions on the corresponding acceptable expression of $d-1$.
\end{enumerate}
\begin{remark}
Given an acceptable expression of $d-1$, we showed that by adding the minimal unit expressions to it we will get at least one acceptable expression of $d$. However, it is possible for both minimal unit expressions to give rise to acceptable expressions of $d$. In fact, all the acceptable expressions of all $d\geq ab-a-b+1$ can be given as a tree with its root being the (unique) acceptable expression of $ab-a-b+1$ and each further branch constructed with the above method. We leave this proof to the interested reader.
\end{remark}
\bibliography{references}
%
%
%
%
%
%
%
\end{document}